\newcommand{\rmbf}[2][\mathbf]{\mathrm{#1{#2}}} 
\newcommand{\gbf}[1]{\boldsymbol{#1}}
\newcommand{\tr}{\mbox{tr}}
\newcommand{\rank}{\mathrm{rank}}
\newcommand{\p}[2][]{\ensuremath{_{(#2)#1}}}
\newtheoremstyle{mystyle}
{}
{}
{\itshape}
{}
{\bfseries}
{.}
{ }
{\thmname{#1}\thmnumber{ #2}\thmnote{ (#3)}}
\theoremstyle{mystyle}
\newtheorem{theorem}{Theorem}
\newtheorem{lemma}{Lemma}
\newtheoremstyle{myremark}
{}
{}
{}
{}
{\itshape}
{.}
{\newline}
{\thmname{#1}\thmnumber{ #2}\thmnote{ (#3)}}
\theoremstyle{remark} 
\newtheorem{remark}{Remark}
\newtheoremstyle{mycondition}
{}
{}
{}
{}
{\bfseries}
{}
{ }
{\thmname{#1}\thmnumber{ (#2)}\thmnote{ (#3)}}
\theoremstyle{mycondition} 
\newtheorem{condition}{Condition}
\def\spacingset#1{\renewcommand{\baselinestretch}%
	{#1}\small\normalsize} \spacingset{1}
\begin{document}

\title{Asymptotic properties of adaptive group Lasso for sparse reduced rank regression}

\author[1]{Kejun He\thanks{Email:~\href{mailto:kejun@stat.tamu.edu}{kejun@stat.tamu.edu}}}
\author[1]{Jianhua Z. Huang}
\affil[1]{Department of Statistics,\protect\\
	Texas A\&M University, College Station, TX 77843, USA}
\date{}

\maketitle

\begin{abstract}
This paper studies the asymptotic properties of the penalized least squares estimator using an adaptive group Lasso penalty for the reduced rank regression. The group Lasso penalty is defined in the way that the regression coefficients corresponding to each predictor are treated as one group. It is shown that under certain regularity conditions, the estimator can achieve the minimax optimal rate of convergence. Moreover, the variable selection consistency can also be achieved, that is, the relevant predictors can be identified with probability approaching one. In the asymptotic theory, the number of response variables, the number of predictors, and the rank number are allowed to grow to infinity with the sample size.
\end{abstract}

\noindent \textbf{Keywords:} {high dimensional regression; large sample theory; minimax; multivariate regression; oracle property; variable selection}

\newpage 

\spacingset{1.58} 

\section{The Model} \label{sec:model}
Suppose there are $q$ multiple response variables $Y_1,Y_2, \dots, Y_{q}$, and $p$ multiple predictors $X_1,X_2, \dots, X_{p}$.  The linear model assumes that 
\begin{equation*} 
Y_k=\sum\limits_{j=1}^p X_jc_{jk} + e_k, \quad k=1,2,\dots, q. 
\end{equation*}
Without loss of generality, we omit the intercept term in the linear model, since this term can be removed by assuming the response variables and the predictors have mean zero. We also assume that the $q$ error terms $e_k$, $k =1, \dots, q$, are random variables with mean zero. Suppose that we have an independent sample of size $n$ from this model. Let $\rmbf Y_k$, $1 \leq k \leq q$, and $\rmbf X_j$, $1 \leq j \leq p$ denote the $n$-dimensional response vector and predictor vector respectively. Let $\rmbf{Y}=(\rmbf{Y}_1,\rmbf{Y}_2, \dots, \rmbf{Y}_{q})$ and $\rmbf{X}=(\rmbf{X}_1,\rmbf{X}_2, \dots, \rmbf{X}_{p})$ be the $n \times q$ and $n \times p$ data matrices respectively. The model for the observed data can be written as
\begin{equation} \label{eqn:model}
\rmbf Y=\rmbf X \rmbf C + \rmbf E, 
\end{equation}
where $\rmbf C$ is the $p \times q$ matrix of regression coefficients and $\rmbf E$ is the $n \times q$ error matrix. Reduced rank regression \citep{Izenman1975} is an effective way of taking into account the possible interrelationships between the response variables by imposing a constraint on the rank of $\rmbf{C}$ to be less than or equal to $r, \, r \leq \min(p,q)$. We can estimate the rank-constraint coefficient matrix by solving the optimization problem
\begin{equation*}
\min_{\textbf C: \rank(\textbf C) \leq r} ||\rmbf Y - \rmbf X \rmbf C||^2.
\end{equation*}

Let $\rmbf C_j^T$ denote the $j$-th row of $\rmbf C$, which is the coefficient vector corresponding to $X_j$. Note that $\rmbf C_j$ being a zero vector indicates that the $X_j$ is irrelevant in predicting the responses. An estimation method that can simultaneously select relevant predictors has the property that it may produce some zero coefficient vectors. \cite{chen2012sparse} considered solving the following optimization problem for variable selection and estimation: 
\begin{equation} \label{eqn:chen optimization}
\min_{\textbf{C}: \rank(\textbf C) \leq r} ||\rmbf Y - \rmbf X \rmbf C||^2 +n \sum\limits_{j=1}^p \lambda_j||\rmbf C_j||, 
\end{equation}
where $||\cdot||$ represents the Frobenius norm and $\lambda_j \ge 0$ is a penalty parameter, $1 \leq j \leq p$. The resulting estimator worked well in simulation studies and real data applications. To study the asymptotic properties, \cite{chen2012sparse} only considered the asymptotic behavior of one ``local minimum'' of \eqref{eqn:chen optimization} when $n \to \infty$ and $p\, , q\, ,r$ are fixed constants. The purpose of this paper is to further the asymptotic theory of \cite{chen2012sparse} in two aspects:
(i) we consider the asymptotic behavior of the global minimum of \eqref{eqn:chen optimization};
(ii) we allow $p$, $q$, $r$ tend to infinity with the sample size $n$ and, in particular, the number of predictors can tend to infinity at a rate faster than that of the number of observations.

If we remove the rank constraint, vectorize the coefficient matrix $\rmbf C$ by rows, and suppose $\lambda_j=\lambda$, $1 \leq j \leq p$, then the penalty term in \eqref{eqn:chen optimization} is the same as the group Lasso penalty \citep{Yuan2006}. The asymptotic behavior of linear regression with the group Lasso penalty and its variations, like the adaptive group Lasso penalty, have been discussed by \cite{lounici2011oracle,wei2010consistent}. However, the techniques and results in these articles cannot be directly applied in reduced rank regression, since the low rank assumption introduces a manifold structure that will make the domain of coefficient matrix non-convex and invalidate the application of the Karush–Kuhn–Tucker condition, an essential tool used in these work.

In a related paper with a broader scope of that also discussed rank selection, \cite{bunea12} studied the rate of convergence for prediction based on the estimator that solves the penalized least squares problem \eqref{eqn:chen optimization} with all $\lambda_j$ being equal; they showed that the estimator achieves, with a logarithmic factor $\log(p)$, the optimal rate of convergence for prediction. We improves this earlier result by showing that when using an adaptive Lasso penalty, our estimator can achieve the optimal rate of convergence for prediction, without the extra logarithmic factor. We are able to show that the variable selection is consistent and that the convergence rate of the coefficient matrix $\rmbf C$ can be the same as when we know a priori which predictors are relevant. These issues were not considered in the previous work.

\section{Main Results} \label{sec:mainResults}
Let $s$ denote the number of relevant predictors, i.e., the number of $\rmbf C_j$'s that are not zero vectors. We allow $p, \, q, \, s,$ and $r$ ($r \leq \min(p,q)$) to grow with $n$. Without loss of generality, let the first $s$ predictors be the relevant ones. Let $\rmbf X\p{1}$, $\rmbf X\p2$ contain the columns of $\rmbf X$, and $\rmbf C\p{1}$, $\rmbf C\p2$ contain the rows of $\rmbf C$ associated with relevant predictors and irrelevant predictors, respectively. 
We also denote $\rmbf \Sigma$ as the Gram matrix of $\rmbf X/\sqrt{n}$, i.e. 
	$$\rmbf \Sigma = \frac{\rmbf X^T \rmbf X}{n}.$$
Analogously, $\rmbf \Sigma\p1$, $\rmbf \Sigma\p2$ are Gram matrices for $\rmbf X\p1/\sqrt{n}$ and $\rmbf X\p2/\sqrt{n}$ respectively. We use $C$ to refer to a generic constant that may change values from context to context, and let $a \ll b$ and $a \lesssim b$ mean $a = o(b)$ and $a = O(b)$ respectively. When $p > n$, $\rmbf \Sigma$ is a degenerate matrix, in the sense that the smallest eigenvalue of $\rmbf \Sigma$ is $0$. Clearly, the ordinary least squares estimator does not work in this case.

We need the following regularity conditions on the design matrix, the error matrix, and the tuning parameters.
\begin{condition} \label{con:restricted eigenvalue}
	There exists a positive constant $C$ such that
	\begin{equation} \label{eqn:restriced eigenvalue}
	  \tr (\rmbf M^T \rmbf \Sigma \rmbf M) \ge C \sum\limits_{j \in \{1, \dots, s\}}||\rmbf m_j||^2
	\end{equation}
	for all $p \times q$ matrices $\rmbf M$ (with rows $\rmbf m_j$) whenever $\sum_{j \in \{s+1, \dots,p\}} ||\rmbf m_j|| \leq 2 \sum_{j \in \{1, \dots, s\}} ||\rmbf m_j||$. 
\end{condition}	

\begin{condition} \label{con:greatest eigenvalue}
	The greatest eigenvalue of $\rmbf \Sigma$ is bounded away from $\infty$.
\end{condition}

\begin{condition} \label{con:error sub gaussian}
	The noise matrix $\rmbf E = (\rmbf E_1, \dots, \rmbf E_n)^T$ has independent and identically distributed rows, with the vector $\rmbf E_i$ being sub-Gaussian in the sense that $\mathbb{E}\exp(t\mathbf{E}_i^T \gbf \eta) \leq \exp(C t^2 ||\gbf \eta||^2)$ for any $\gbf \eta \in \mathbb R^q$. 
\end{condition}

\begin{condition} \label{con:variate lasso lambda}
	 For $\lambda\p1=\max_{1 \leq j \leq s} \lambda_j$ and $\lambda\p2 = \min_{(s+1) \leq j \leq p} \lambda_j$, we have that $\lambda\p1 \lesssim \sqrt{{r(q+s-r)}/{ns}}$ and $\lambda\p2 \gg \sqrt{ \{q \log(p)+r (q+s-r) \} /{n}}$.
\end{condition}

Condition \eqref{con:restricted eigenvalue} is similar to but slightly different from the ``restricted eigenvalue'' (RE) condition introduced by \cite{bickel09} for studying the asymptotic properties of Lasso regression. This condition implies that (i) the number of relevant predictors is less than the number of observations and, (ii) the least eigenvalue of $\rmbf \Sigma\p1$ is greater than or equal to $C$ by letting $\rmbf m_j= \rmbf 0$, $s+1 \leq j \leq p$, which is a necessary condition to identify $\rmbf C\p1$ if we have known which variables are relevant. From the proofs of the theorems, it can be shown that the constant $2$ of the assumption $\sum_{j \in \{s+1, \dots,p\}} ||\rmbf m_j|| \leq 2 \sum_{j \in \{1, \dots, s\}} ||\rmbf m_j||$ in Condition \eqref{con:restricted eigenvalue} can be replaced by any constant $C > 1$. 

Condition \eqref{con:greatest eigenvalue} and \eqref{con:error sub gaussian} are commonly seen in the regression literature \citep{chen2012sparse,Izenman2008,wei2010consistent}. In particular,  Condition \eqref{con:greatest eigenvalue} implies that all the diagonal elements in $\rmbf \Sigma$ are bounded away from $\infty$ and also the greatest singular value of $(\rmbf X\p1^T \rmbf X\p2/n)$ is bounded away from $\infty$. Condition \eqref{con:error sub gaussian} is used to control the stochastic error. For Condition \eqref{con:variate lasso lambda}, the upper bound of $\lambda\p1$ is used to identify the relevant predictors, and the lower bound of $\lambda\p2$ is used to annihilate the irrelevant predictors.

Let $\rmbf C^0$ be the true value of the coefficient matrix of $\rmbf C$. 
\begin{theorem}[Oracle Properties of the Estimator] \label{thm:convergence rate}
Assume Conditions \eqref{con:restricted eigenvalue} -- \eqref{con:variate lasso lambda} are satisfied. Then the solution $\widehat{\rmbf C}$ of \eqref{eqn:chen optimization} has the following properties:
	\begin{enumerate}
		\item the $L_2$ norm of prediction error $\|\rmbf X\widehat{\rmbf C} - \rmbf X \rmbf C^0\|$ is upper bounded by $O_p(\sqrt{r(q+s-r)})$; 
		\item $\mathbb P(\widehat{\rmbf C}_j = \rmbf 0, \, s+1 \leq j \leq p) \to 1$ as $n \to \infty$;
		\item  $\widehat{\rmbf C}$ is converging to $\rmbf C^0$ with the rate upper bounded by $O_p(\sqrt{{r(q+s-r)}/n})$.
	\end{enumerate}
\end{theorem}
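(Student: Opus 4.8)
The plan is to derive all three conclusions from a single basic inequality obtained from the global optimality of $\widehat{\rmbf C}$, combined with two ingredients tailored to the sparse low-rank structure: a sharp stochastic bound that exploits $\rank(\widehat{\rmbf C}),\rank(\rmbf C^0)\le r$, and a device that restores convexity so a subgradient argument can be used for selection. Write $\gbf{\Delta}=\widehat{\rmbf C}-\rmbf C^0$, and note $\rank(\gbf{\Delta})\le 2r$ and $\gbf{\Delta}_j=\widehat{\rmbf C}_j$ for $j>s$ since $\rmbf C^0_j=\rmbf 0$ there. Because $\rmbf C^0$ is feasible, comparing the objective at $\widehat{\rmbf C}$ and $\rmbf C^0$ and substituting $\rmbf Y=\rmbf X\rmbf C^0+\rmbf E$ gives $\|\rmbf X\gbf{\Delta}\|^2\le 2\langle\rmbf E,\rmbf X\gbf{\Delta}\rangle+n\sum_j\lambda_j(\|\rmbf C^0_j\|-\|\widehat{\rmbf C}_j\|)$, where $\langle\cdot,\cdot\rangle$ denotes the Frobenius inner product. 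The triangle inequality on the relevant rows together with $\rmbf C^0_j=\rmbf 0$ on the irrelevant rows bounds the penalty difference by $n\lambda\p1\sum_{j\le s}\|\gbf{\Delta}_j\|-n\lambda\p2\sum_{j>s}\|\gbf{\Delta}_j\|$.

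Next I would split the noise as $\langle\rmbf E,\rmbf X\gbf{\Delta}\rangle=\langle\rmbf E,\rmbf X\p1\gbf{\Delta}\p1\rangle+\langle\rmbf E,\rmbf X\p2\gbf{\Delta}\p2\rangle$. For the irrelevant part, $\langle\rmbf E,\rmbf X\p2\gbf{\Delta}\p2\rangle\le(\max_{j>s}\|\rmbf X_j^T\rmbf E\|)\sum_{j>s}\|\gbf{\Delta}_j\|$, and a union bound under Condition \eqref{con:error sub gaussian} gives $\max_{j>s}\|\rmbf X_j^T\rmbf E\|=O_p(\sqrt{n(q+\log p)})$, which the lower bound on $\lambda\p2$ renders $o(n\lambda\p2)$; this term is thus absorbed by the negative penalty on the irrelevant rows, and along the way yields the cone inequality $\sum_{j>s}\|\gbf{\Delta}_j\|\lesssim\sum_{j\le s}\|\gbf{\Delta}_j\|$ required to invoke Condition \eqref{con:restricted eigenvalue}. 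The crucial term is $\langle\rmbf E,\rmbf X\p1\gbf{\Delta}\p1\rangle$: since $\rmbf X\p1\gbf{\Delta}\p1$ has column space inside the (at most $s$-dimensional) range of $\rmbf X\p1$ and rank at most $2r$, I would bound it by $\sup\langle\rmbf E,\rmbf M\rangle$ over unit-Frobenius-norm matrices $\rmbf M$ of this type and control that supremum by a covering argument over the associated Grassmann/Stiefel manifolds; this is where the effective dimension of order $r(q+s-r)$ enters, giving $\langle\rmbf E,\rmbf X\p1\gbf{\Delta}\p1\rangle=O_p(\sqrt{r(q+s-r)})\,\|\rmbf X\p1\gbf{\Delta}\p1\|$. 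Using Conditions \eqref{con:restricted eigenvalue} and \eqref{con:greatest eigenvalue} to pass between $\|\rmbf X\p1\gbf{\Delta}\p1\|$, $\sqrt n\,\|\gbf{\Delta}\p1\|$ and $\|\rmbf X\gbf{\Delta}\|$, and the upper bound on $\lambda\p1$ to get $n\lambda\p1\sum_{j\le s}\|\gbf{\Delta}_j\|\le n\lambda\p1\sqrt s\,\|\gbf{\Delta}\p1\|\lesssim\sqrt{nr(q+s-r)}\,\|\gbf{\Delta}\p1\|$, the basic inequality collapses to $\|\rmbf X\gbf{\Delta}\|^2\lesssim O_p(\sqrt{r(q+s-r)})\,\|\rmbf X\gbf{\Delta}\|$, which proves conclusion 1 and, via Condition \eqref{con:restricted eigenvalue}, the relevant-part bound $\|\gbf{\Delta}\p1\|=O_p(\sqrt{r(q+s-r)/n})$.

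For conclusion 2 I would sidestep the non-convexity flagged in the introduction by fixing the right factor of the estimate. Write $\widehat{\rmbf C}=\widehat{\rmbf A}\widehat{\rmbf B}^T$ with $\widehat{\rmbf B}\in\mathbb R^{q\times r}$ having orthonormal columns; then for this fixed $\widehat{\rmbf B}$ every $\rmbf A$ produces a feasible rank-$\le r$ matrix $\rmbf A\widehat{\rmbf B}^T$, so $\widehat{\rmbf A}$ must minimize $\|\rmbf Y\widehat{\rmbf B}-\rmbf X\rmbf A\|^2+n\sum_j\lambda_j\|\rmbf A_j\|$ (using $\|\rmbf A_j\widehat{\rmbf B}^T\|=\|\rmbf A_j\|$). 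This surrogate is \emph{convex} in $\rmbf A$, so the group-Lasso Karush--Kuhn--Tucker conditions apply and give $\widehat{\rmbf A}_j=\rmbf 0$ whenever $\|\rmbf X_j^T(\rmbf Y\widehat{\rmbf B}-\rmbf X\widehat{\rmbf A})\|<n\lambda_j/2$. Writing the residual as $(\rmbf E-\rmbf X\gbf{\Delta})\widehat{\rmbf B}$, for $j>s$ I would bound $\|\rmbf X_j^T(\rmbf E-\rmbf X\gbf{\Delta})\widehat{\rmbf B}\|\le\|\rmbf X_j^T\rmbf E\|+\|\rmbf X_j\|\,\|\rmbf X\gbf{\Delta}\|$, which by the two estimates above is $O_p(\sqrt{n(q+\log p)})+O_p(\sqrt{nr(q+s-r)})$; the lower bound on $\lambda\p2$ makes this $o(n\lambda\p2)\le o(n\lambda_j)$ uniformly over $j>s$. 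Hence $\widehat{\rmbf A}_j=\rmbf 0$, so $\widehat{\rmbf C}_j=\widehat{\rmbf A}_j\widehat{\rmbf B}^T=\rmbf 0$, for all $j>s$ with probability tending to one, giving conclusion 2.

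Conclusion 3 then follows by intersecting the two high-probability events: on the selection-consistency event $\gbf{\Delta}\p2=\rmbf 0$, so $\|\widehat{\rmbf C}-\rmbf C^0\|=\|\gbf{\Delta}\p1\|=O_p(\sqrt{r(q+s-r)/n})$ from the relevant-part bound already obtained. I expect the main obstacle to be the sharp stochastic bound $\langle\rmbf E,\rmbf X\p1\gbf{\Delta}\p1\rangle=O_p(\sqrt{r(q+s-r)})\,\|\rmbf X\p1\gbf{\Delta}\p1\|$: it is exactly here that the rank constraint must be converted into a reduction of effective dimension from $sq$ to $r(q+s-r)$, done uniformly over the data-dependent rank-$2r$ row space of $\gbf{\Delta}\p1$, which requires a manifold covering argument rather than an entrywise or pure operator-norm bound (the latter would retain a spurious $\sqrt n$ or $\log p$ factor of the type appearing in earlier work). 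A secondary difficulty is the bookkeeping that simultaneously establishes the cone condition for Condition \eqref{con:restricted eigenvalue} and absorbs the irrelevant-design noise into the $\lambda\p2$ penalty, since both must hold on the same event before the restricted eigenvalue inequality can be applied.
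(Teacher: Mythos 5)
Your proposal is correct, and for conclusions 1 and 3 it follows essentially the paper's own route: the basic inequality from global optimality, the split of $\langle\rmbf E,\rmbf X\gbf\Delta\rangle$ into relevant and irrelevant parts, a Stiefel/Grassmann covering plus Dudley entropy bound for the rank-constrained relevant part (this is precisely the paper's Lemma~\ref{lem:oracle estimator}, proved via Proposition 8 of Szarek; your explicit use of rank $\le 2r$ for $\gbf\Delta$ is if anything more careful than the paper's statement, and changes only constants), a uniform bound on $\max_j\|\rmbf X_j^T\rmbf E\|$ (the paper's Lemma~\ref{lem:error order2} gives $O_p(\sqrt{nq\log p})$ via the maximal inequality of van der Vaart and Wellner, while your sphere-net union bound gives the slightly sharper $O_p(\sqrt{n(q+\log p)})$ --- either suffices since both are $\ll n\lambda\p2$ under Condition \eqref{con:variate lasso lambda}), and the absorption/contradiction step that yields the cone inequality $\delta_2\lesssim\delta_1$ needed to invoke Condition \eqref{con:restricted eigenvalue} (the paper runs this as an explicit two-case argument with constant $2$, and remarks that any constant $>1$ works, so your unspecified constant is harmless). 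Where you genuinely diverge is conclusion 2. The paper compares $Q(\widehat{\rmbf C})$ with the thresholded competitor $\widetilde{\rmbf C}=(\widehat{\rmbf C}\p1^T,\rmbf 0^T)^T$ --- feasible since zeroing rows cannot increase rank --- and shows that on the event $\|\widehat{\rmbf C}\p2\|>0$ the thresholded point strictly decreases the objective, contradicting global optimality. You instead fix the orthonormal right factor $\widehat{\rmbf B}$ and observe that $\widehat{\rmbf A}$ minimizes the convex group-Lasso surrogate $\|\rmbf Y\widehat{\rmbf B}-\rmbf X\rmbf A\|^2+n\sum_j\lambda_j\|\rmbf A_j\|$ (this is valid: besides $\|\rmbf A_j\widehat{\rmbf B}^T\|=\|\rmbf A_j\|$, one has $\|\rmbf Y-\rmbf X\rmbf A\widehat{\rmbf B}^T\|^2=\|\rmbf Y\|^2-\|\rmbf Y\widehat{\rmbf B}\|^2+\|\rmbf Y\widehat{\rmbf B}-\rmbf X\rmbf A\|^2$, a Pythagoras identity you leave implicit but which holds exactly because $\widehat{\rmbf B}^T\widehat{\rmbf B}=\rmbf I_r$), so the Karush--Kuhn--Tucker conditions apply after all --- an elegant rejoinder to the introduction's remark that the manifold constraint ``invalidates'' KKT; restricting to a linear section of the rank variety restores convexity. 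Both selection arguments consume the same inputs from conclusion 1, namely the uniform noise bound and a cross-term control ($\lambda_{\max}(\rmbf X\p2^T\rmbf X\p1)\,\|\widehat{\rmbf C}\p1-\rmbf C^0\p1\|$ in the paper, $\|\rmbf X_j\|\,\|\rmbf X\gbf\Delta\|\le O_p(\sqrt{nr(q+s-r)})$ in yours), both dominated by $n\lambda\p2$ under Condition \eqref{con:variate lasso lambda}. As for what each buys: the paper's comparison is more elementary and self-contained, and generalizes to any setting where the thresholded point remains feasible; your KKT route exposes the dual-certificate mechanism ($n\lambda_j$ dominating $2\|\rmbf X_j^T(\rmbf Y\widehat{\rmbf B}-\rmbf X\widehat{\rmbf A})\|$ uniformly over $j>s$) and applies simultaneously to every global minimizer through its own factorization, at the modest cost of verifying the surrogate identity.
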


\begin{remark}
  With a straightforward modification of Theorem 5 in \cite{koltchinskii2011nuclear}, the minimax lower bound of $L_2$ norm of prediction error for this model is $O_p(\sqrt{r(q\vee s)}) \asymp O_p(\sqrt{r(q+s-r)})$, where $q \vee s = \max\{q,s\}$. The first result of Theorem \ref{thm:convergence rate} says that our estimator achieves the minimax optimal rate of convergence for prediction. The $L_2$ norm of prediction error obtained in \cite{bunea12} is $O_p(\sqrt{r\{q + s \log(p)\} })$, which has an extra logarithmic factor.
\end{remark}

\begin{remark}
	The second result of Theorem \ref{thm:convergence rate} says that the penalized estimator can consistently identify the relevant predictors. The proof of this theorem also shows that the estimator with the knowledge of which predictors are relevant has the same convergence rate given in the third result.
\end{remark}

 One drawback of Theorem \ref{thm:convergence rate} is that the $p$ penalty parameters $\lambda_j$'s have to be specified to satisfy Condition \eqref{con:variate lasso lambda}. Using the idea of adaptive (group) Lasso \citep{zou2006adaptive,wei2010consistent}, we now specify these $p$ penalty parameters using a single penalty parameter multiplied by a power of a certain pilot estimator. More precisely, we first solve problem \eqref{eqn:chen optimization} with a single penalty parameter to get a pilot estimator of $\rmbf C$, i.e., denote that 
 \begin{equation} \label{eqn:lasso estimator}
   \widehat{\rmbf C}^{\mathrm{Lasso}} = \min_{\textbf{C}: \rank(\textbf C) \leq r} ||\rmbf Y - \rmbf X \rmbf C||^2 +n \sum\limits_{j=1}^p \lambda^{\mathrm{Lasso}} \,||\rmbf C_j||.
 \end{equation}
 Then let the $p$ penalty parameters be 
 \begin{equation}\label{eqn:lambdas in adaptive lasso} 
 \lambda_j = \left\{\begin{aligned} 
 & \lambda^{\mathrm{Adap}} \, ||\widehat{\rmbf C}^{\mathrm{Lasso}}_j||^{-\beta}, & \mbox{ if} \quad \widehat{\rmbf C}^{\mathrm{Lasso}}_j \not= \rmbf 0, \\
 & +\infty, & \mbox{ if} \quad \widehat{\rmbf C}^{\mathrm{Lasso}}_j = \rmbf 0, 
 \end{aligned}\right.
 \end{equation}
 with a penalty parameter $\lambda^{\mathrm{Adap}}$ and some fixed $\beta >0$. Denote $\widehat{\rmbf C}^{\mathrm{Adap}}$ as the penalized estimator that is the solution of \eqref{eqn:chen optimization} with $\lambda_j$ defined in \eqref{eqn:lambdas in adaptive lasso}, $1 \leq j \leq p$. 
 
We now show that the penalty parameters as defined in \eqref{eqn:lambdas in adaptive lasso} satisfy Condition \eqref{con:variate lasso lambda} and so the resulting estimator $\widehat{\rmbf C}^{\mathrm{Adap}}$ enjoys the nice asymptotic properties as stated in Theorem \ref{thm:convergence rate}. We need the following regularity conditions.
 \begin{condition} \label{con:lasso lambda}
 	  $\sqrt{{q \log(p) }/{n}} \ll  \lambda^{\mathrm{Lasso}} \lesssim {n^{-\epsilon}}/{s} $ for some $\epsilon > 0$.
 \end{condition}
 
 \begin{condition}\label{con:uniform lower bound for all C}
 	$\|\rmbf C_j^0\|$ has a positive lower bound, i.e., $\min_{j \in \{1,\dots,s \}}\|\rmbf C_j^0\| \geq C > 0$ for some $C$.
 \end{condition}
 
 \begin{condition}\label{con:condition on beta}
 	For $\lambda^{\mathrm{Adap}}$, we have that $\sqrt{ \{q \log(p)+r (q+s-r) \} /n^{1+2\,\epsilon\beta}} \ll \lambda^{\mathrm{Adap}} \lesssim \sqrt{r(q+s-r)/ns}$.
 \end{condition}

Condition \eqref{con:lasso lambda} is similar to one of the regularity conditions in \cite{wei2010consistent}. It implies that $p = o(\exp(n))$, i.e., the number of predictors cannot tend to infinity faster than the exponential of the sample size. Conditions \eqref{con:lasso lambda} and \eqref{con:uniform lower bound for all C} guarantee the pilot estimator defined in \eqref{eqn:lasso estimator} has a certain convergence rate and the relevant predictors can be identified. Similar regularity condition on $\lambda^{\mathrm{Adap}}$ in Condition \eqref{con:condition on beta} was first seen in \cite{zou2006adaptive}. As a special case, if we restrict $\beta =1$, then Condition \eqref{con:condition on beta} can be replaced by order requirements on $n$, $r$, $p$ and $q$, similar to Condition (C3)* in \cite{wei2010consistent}. In particular, the first, second and forth requirement of Condition (C3)* in \cite{wei2010consistent} implied the lower bound of $\lambda^{\mathrm{Adap}}$, while its third requirement indicated the upper bound.
 
\begin{lemma} \label{lem:group Lasso}
	Assume Conditions \eqref{con:restricted eigenvalue} -- \eqref{con:error sub gaussian} and \eqref{con:lasso lambda} hold. Then the solution $\widehat{\rmbf C}^{\mathrm{Lasso}}$ of \eqref{eqn:lasso estimator} satisfies $||\widehat{\rmbf C}^{\mathrm{Lasso}} - \rmbf C^0|| \leq O_p(n^{-\epsilon})$. If we further assume that Condition \eqref{con:uniform lower bound for all C} is satisfied, then $\|\widehat{\rmbf C}^{\mathrm{Lasso}}_j \| \not= 0$, $j= 1, \dots, s$, with probability approaching 1. 
\end{lemma}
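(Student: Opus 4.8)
The plan is to run the standard ``basic inequality'' argument for group-Lasso-type estimators, combined with the restricted eigenvalue Condition \eqref{con:restricted eigenvalue}, to obtain the crude (rate $n^{-\epsilon}$) consistency bound on the pilot estimator. Throughout I would write $\lambda = \lambda^{\mathrm{Lasso}}$, set $\Delta = \widehat{\rmbf C}^{\mathrm{Lasso}} - \rmbf C^0$ with $j$-th row $\Delta_j$, and let $S = \{1,\dots,s\}$ index the relevant predictors. It is worth noting in advance that the rank constraint enters only through the feasibility of $\rmbf C^0$ (which satisfies $\rank(\rmbf C^0)\le r$) in \eqref{eqn:lasso estimator}; since Condition \eqref{con:restricted eigenvalue} is imposed on all matrices regardless of rank, the non-convex manifold structure that complicates Theorem \ref{thm:convergence rate} plays no role here, which is precisely why this lemma yields only the weak rate $n^{-\epsilon}$ rather than a sharp rate.

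First I would record the basic inequality. Since $\widehat{\rmbf C}^{\mathrm{Lasso}}$ is a global minimizer and $\rmbf C^0$ is feasible, substituting $\rmbf Y = \rmbf X\rmbf C^0 + \rmbf E$ and rearranging gives
\begin{equation*}
  \|\rmbf X\Delta\|^2 \le 2\,\tr(\rmbf E^T\rmbf X\Delta) + n\lambda\Big(\sum\nolimits_{j=1}^p\|\rmbf C_j^0\| - \sum\nolimits_{j=1}^p\|\widehat{\rmbf C}_j^{\mathrm{Lasso}}\|\Big).
\end{equation*}
Since $\tr(\rmbf E^T\rmbf X\Delta)$ is the entrywise inner product of $\rmbf X^T\rmbf E$ and $\Delta$, summing Cauchy--Schwarz over the $p$ rows bounds it by $\sum_{j=1}^p\|\rmbf X_j^T\rmbf E\|\,\|\Delta_j\| \le (\max_j\|\rmbf X_j^T\rmbf E\|)\sum_j\|\Delta_j\|$, where the $j$-th row of $\rmbf X^T\rmbf E$ is $\rmbf X_j^T\rmbf E$. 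Because $\rmbf C^0$ is supported on $S$, the triangle inequality bounds the penalty difference above by $\sum_{j\in S}\|\Delta_j\| - \sum_{j\notin S}\|\Delta_j\|$.

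The technical heart, and the step I expect to be the main obstacle, is controlling $T := \max_{1\le j\le p}\|\rmbf X_j^T\rmbf E\|$. Here I would use that $\rmbf X_j^T\rmbf E$ is a weighted sum $\sum_{i=1}^n X_{ij}\rmbf E_i$ of independent sub-Gaussian vectors (Condition \eqref{con:error sub gaussian}), hence itself sub-Gaussian with parameter of order $\|\rmbf X_j\|^2 \lesssim n$, the latter bound following from Condition \eqref{con:greatest eigenvalue}. A concentration inequality for the Euclidean norm of a sub-Gaussian vector in $\mathbb R^q$, followed by a union bound over the $p$ predictors, should give $T = O_p(\sqrt{n(q+\log p)})$. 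The lower bound $\lambda\gg\sqrt{q\log(p)/n}$ in Condition \eqref{con:lasso lambda} then forces $n\lambda\gg\sqrt{nq\log p}\gtrsim T$, i.e. $T = o_p(n\lambda)$; this is the only genuinely probabilistic part of the argument.

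With $T = o_p(n\lambda)$ in hand, feeding $2T \le c\,n\lambda$ back into the basic inequality shows (with probability tending to one, for any preassigned small $c$) that $\Delta$ obeys the cone constraint $\sum_{j\notin S}\|\Delta_j\| \le (1+o(1))\sum_{j\in S}\|\Delta_j\|$, so Condition \eqref{con:restricted eigenvalue} applies, the constant $1+o(1)$ being admissible by the remark that any $C>1$ may replace the constant $2$. I would then combine the lower bound $\|\rmbf X\Delta\|^2 = n\,\tr(\Delta^T\rmbf\Sigma\Delta)\ge Cn\sum_{j\in S}\|\Delta_j\|^2 \ge Cn(\sum_{j\in S}\|\Delta_j\|)^2/s$ with the upper bound $\|\rmbf X\Delta\|^2 \lesssim n\lambda\sum_{j\in S}\|\Delta_j\|$ (obtained by discarding the negative off-support term) to get $\sum_{j\in S}\|\Delta_j\|\lesssim\lambda s$; the cone constraint transfers this to the off-support rows, giving $\|\Delta\|\lesssim\lambda s$, and the upper bound $\lambda\lesssim n^{-\epsilon}/s$ in Condition \eqref{con:lasso lambda} finally delivers $\|\widehat{\rmbf C}^{\mathrm{Lasso}} - \rmbf C^0\| = O_p(n^{-\epsilon})$. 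For the last assertion, under Condition \eqref{con:uniform lower bound for all C} the reverse triangle inequality gives $\|\widehat{\rmbf C}_j^{\mathrm{Lasso}}\| \ge \|\rmbf C_j^0\| - \|\Delta\| \ge C - O_p(n^{-\epsilon})$ for each $j\in S$, which is positive with probability approaching one uniformly in $j\in S$; hence $\widehat{\rmbf C}_j^{\mathrm{Lasso}}\ne\rmbf 0$ for all $j=1,\dots,s$ with probability approaching one.
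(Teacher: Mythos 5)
Your proposal is correct and takes essentially the same route as the paper: the paper's proof also runs the basic inequality with $\lambda\p1=\lambda\p2=\lambda^{\mathrm{Lasso}}$, controls the stochastic term through exactly your quantity $\xi=\max_j\|\rmbf X_j^T\rmbf E\|$ (its Lemma \ref{lem:error order2}), forces the cone constraint $\delta_2\le 2\delta_1$ by a contradiction argument equivalent to your $(1+c)/(1-c)$ derivation, and then combines Condition \eqref{con:restricted eigenvalue}, Cauchy--Schwarz ($\delta_1\le\sqrt s\,\|\widehat{\rmbf C}\p1^{\mathrm{Lasso}}-\rmbf C\p1^0\|$), and the bound $\lambda^{\mathrm{Lasso}}\lesssim n^{-\epsilon}/s$ to get $\|\widehat{\rmbf C}^{\mathrm{Lasso}}-\rmbf C^0\|=O_p(n^{-\epsilon})$, finishing with the same reverse triangle inequality for the selection claim. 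The only (immaterial) differences are cosmetic: you bound $\xi$ by $O_p(\sqrt{n(q+\log p)})$ via a net-and-union argument where the paper uses $O_p(\sqrt{nq\log p})$ via Lemma 2.2.2 of \cite{vaartwellner96}, and both suffice under Condition \eqref{con:lasso lambda}.
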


\begin{theorem} \label{thm:adaptive group Lasso}
	Assume Conditions \eqref{con:restricted eigenvalue} -- \eqref{con:error sub gaussian} and \eqref{con:lasso lambda} -- \eqref{con:condition on beta} hold. Then the estimator $\widehat{\rmbf C}^{\mathrm{Adap}}$ has the properties stated in Theorem \ref{thm:convergence rate}. Moreover, the relevant predictors can also be identified with probability approaching $1$, i.e., $\mathbb P(\widehat{\rmbf C}^{\mathrm{Adap}}_j \neq \rmbf 0, \, 1 \leq j \leq s) \to 1$.
\end{theorem}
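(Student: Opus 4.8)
The strategy is to reduce Theorem~\ref{thm:adaptive group Lasso} to Theorem~\ref{thm:convergence rate} by verifying that the data-dependent penalties $\lambda_j$ defined in \eqref{eqn:lambdas in adaptive lasso} satisfy Condition~\eqref{con:variate lasso lambda} on an event whose probability tends to one. The proof of Theorem~\ref{thm:convergence rate} really establishes an implication of the form ``on a high-probability event $\mathcal A_n$ determined by $\rmbf X$ and $\rmbf E$, whenever $\lambda\p1 \lesssim \sqrt{r(q+s-r)/ns}$ and $\lambda\p2 \gg \sqrt{\{q\log(p)+r(q+s-r)\}/n}$ hold, the three properties follow.'' It therefore suffices to exhibit an event $\mathcal B_n$ with $\mathbb P(\mathcal B_n)\to 1$ on which the random $\lambda_j$ obey Condition~\eqref{con:variate lasso lambda}; on $\mathcal A_n\cap\mathcal B_n$ all three properties of Theorem~\ref{thm:convergence rate} transfer to $\widehat{\rmbf C}^{\mathrm{Adap}}$.

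First I would bound $\lambda\p1=\max_{1\le j\le s}\lambda_j$. By Lemma~\ref{lem:group Lasso}, $\|\widehat{\rmbf C}^{\mathrm{Lasso}}-\rmbf C^0\|\le O_p(n^{-\epsilon})$, and since the Frobenius norm dominates each row norm, $\|\widehat{\rmbf C}^{\mathrm{Lasso}}_j-\rmbf C_j^0\|\le O_p(n^{-\epsilon})$ uniformly in $j$. Combining this with Condition~\eqref{con:uniform lower bound for all C}, for every relevant $j$ we obtain $\|\widehat{\rmbf C}^{\mathrm{Lasso}}_j\|\ge C-O_p(n^{-\epsilon})\ge C/2$ on $\mathcal B_n$; Lemma~\ref{lem:group Lasso} also guarantees $\widehat{\rmbf C}^{\mathrm{Lasso}}_j\neq\rmbf 0$ for these $j$, so $\lambda_j$ is given by the finite branch of \eqref{eqn:lambdas in adaptive lasso} and $\|\widehat{\rmbf C}^{\mathrm{Lasso}}_j\|^{-\beta}=O(1)$. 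Hence $\lambda\p1\lesssim \lambda^{\mathrm{Adap}}$, and the upper bound in Condition~\eqref{con:condition on beta} yields $\lambda\p1\lesssim\sqrt{r(q+s-r)/ns}$, which is the upper bound required in Condition~\eqref{con:variate lasso lambda}.

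Next I would bound $\lambda\p2=\min_{s+1\le j\le p}\lambda_j$ from below, which is the technical crux of the adaptive construction. For an irrelevant $j$ with $\widehat{\rmbf C}^{\mathrm{Lasso}}_j=\rmbf 0$ we have $\lambda_j=+\infty$, so only the indices with $\widehat{\rmbf C}^{\mathrm{Lasso}}_j\neq\rmbf 0$ constrain the minimum. For such $j$, since $\rmbf C_j^0=\rmbf 0$, the pilot bound gives $\|\widehat{\rmbf C}^{\mathrm{Lasso}}_j\|=\|\widehat{\rmbf C}^{\mathrm{Lasso}}_j-\rmbf C_j^0\|\le M n^{-\epsilon}$ on $\mathcal B_n$ for some constant $M$, whence $\|\widehat{\rmbf C}^{\mathrm{Lasso}}_j\|^{-\beta}\ge M^{-\beta}n^{\epsilon\beta}$. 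Therefore $\lambda\p2\ge M^{-\beta}n^{\epsilon\beta}\,\lambda^{\mathrm{Adap}}$, and the lower bound in Condition~\eqref{con:condition on beta}, namely $\lambda^{\mathrm{Adap}}\gg\sqrt{\{q\log(p)+r(q+s-r)\}/n^{1+2\epsilon\beta}}$, multiplied by $n^{\epsilon\beta}$, gives $\lambda\p2\gg\sqrt{\{q\log(p)+r(q+s-r)\}/n}$, exactly the lower bound required in Condition~\eqref{con:variate lasso lambda}. This completes the verification of Condition~\eqref{con:variate lasso lambda} on $\mathcal B_n$, and applying Theorem~\ref{thm:convergence rate} on $\mathcal A_n\cap\mathcal B_n$ establishes the three oracle properties for $\widehat{\rmbf C}^{\mathrm{Adap}}$.

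Finally, for the selection of relevant predictors I would use the third property just obtained, $\|\widehat{\rmbf C}^{\mathrm{Adap}}-\rmbf C^0\|=O_p(\sqrt{r(q+s-r)/n})=o_p(1)$, the rate $r(q+s-r)/n$ tending to zero being part of the consistency regime under which the theorem is stated. Because this Frobenius error dominates every row error, $\max_{1\le j\le s}\|\widehat{\rmbf C}^{\mathrm{Adap}}_j-\rmbf C_j^0\|=o_p(1)$, while Condition~\eqref{con:uniform lower bound for all C} gives $\min_{1\le j\le s}\|\rmbf C_j^0\|\ge C>0$. A triangle-inequality argument then yields $\|\widehat{\rmbf C}^{\mathrm{Adap}}_j\|\ge C-o_p(1)>0$ simultaneously for all $j\le s$ with probability tending to one, so $\mathbb P(\widehat{\rmbf C}^{\mathrm{Adap}}_j\neq\rmbf 0,\,1\le j\le s)\to 1$. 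The main obstacle I anticipate is the transfer step: since the pilot and the final estimator are built from the same sample, $\lambda_j$ is not independent of the noise entering Theorem~\ref{thm:convergence rate}, so one must argue that the stochastic event $\mathcal A_n$ and the event $\mathcal B_n$ on which Condition~\eqref{con:variate lasso lambda} holds may be intersected without circularity; this, together with the explicit matching of the pilot rate $n^{-\epsilon}$, raised to the power $-\beta$, against the delicate two-sided bound in Condition~\eqref{con:condition on beta}, is the heart of the proof.
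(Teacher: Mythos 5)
Your proposal is correct and follows essentially the same route as the paper: verify the two bounds of Condition~\eqref{con:variate lasso lambda} for the data-dependent penalties via Lemma~\ref{lem:group Lasso} (the lower bound $\|\widehat{\rmbf C}^{\mathrm{Lasso}}_j\|\ge C/2$ for relevant $j$ combined with the upper bound in Condition~\eqref{con:condition on beta}, and the pilot rate $\|\widehat{\rmbf C}^{\mathrm{Lasso}}_j\|\le O_p(n^{-\epsilon})$ for irrelevant $j$ combined with the lower bound in Condition~\eqref{con:condition on beta}), then invoke Theorem~\ref{thm:convergence rate} and finish the selection claim for relevant predictors by the triangle inequality with Condition~\eqref{con:uniform lower bound for all C}. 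Your explicit high-probability event intersection and the $n^{\epsilon\beta}$ bookkeeping are finer-grained than the paper's presentation, which silently treats the random $\lambda_j$ as if Condition~\eqref{con:variate lasso lambda} held deterministically, but this is a matter of rigor in exposition rather than a different method.
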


\section{Preliminary Lemmas} \label{sec:preliminaryLemmas}
The following two lemmas play important roles in the proofs of the main results.

\begin{lemma} \label{lem:oracle estimator}
Suppose that the eigenvalues of $\rmbf \Sigma\p1$ are bounded away from $0$ and $\infty$ and that the $n \times q$ error matrix $\rmbf E$ satisfies Condition \eqref{con:error sub gaussian}. Then 
\begin{equation} \label{eqn:error order}
	\langle \rmbf X\p1 (\rmbf C\p1^1-\rmbf C\p1^2), \rmbf E \rangle \leq \|{\rmbf C}\p1^1-\rmbf C^2\p1 \| \cdot O_p(\sqrt{nr(q+s-r)}),
\end{equation}
for any $s \times q$ matrices $\rmbf C^1\p1$ and $\rmbf C\p1^2$ with rank less than or equal to $r$.
\end{lemma}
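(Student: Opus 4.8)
The plan is to peel off the factor $\|\rmbf C\p1^1-\rmbf C\p1^2\|$ and reduce the claim to a bound on a single random matrix. Writing $\gbf\Delta:=\rmbf C\p1^1-\rmbf C\p1^2$ and using the trace inner product,
\[ \langle\rmbf X\p1\gbf\Delta,\rmbf E\rangle=\tr(\gbf\Delta^T\rmbf X\p1^T\rmbf E)=\langle\gbf\Delta,\rmbf A\rangle,\qquad \rmbf A:=\rmbf X\p1^T\rmbf E. \]
The matrix $\gbf\Delta$ is $s\times q$ with $\rank(\gbf\Delta)\le 2r$, being the difference of two matrices of rank at most $r$; the factor $2$ will be absorbed into the $O_p$ constant. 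Everything therefore reduces to understanding how the random matrix $\rmbf A$ pairs with a low-rank direction.

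To separate $\gbf\Delta$ from the randomness I would invoke the von Neumann trace inequality followed by Cauchy--Schwarz, keeping only the (at most $2r$) nonzero singular values of $\gbf\Delta$:
\[ \langle\gbf\Delta,\rmbf A\rangle\le\sum_{i=1}^{2r}\sigma_i(\gbf\Delta)\,\sigma_i(\rmbf A)\le\|\gbf\Delta\|\Big(\sum_{i=1}^{2r}\sigma_i(\rmbf A)^2\Big)^{1/2}, \]
where $\sigma_i(\cdot)$ is the $i$th largest singular value and $\|\gbf\Delta\|$ is the Frobenius norm. This already produces the prefactor $\|\rmbf C\p1^1-\rmbf C\p1^2\|$, so it remains to show that the random, $\gbf\Delta$-free quantity $(\sum_{i=1}^{2r}\sigma_i(\rmbf A)^2)^{1/2}$ is $O_p(\sqrt{nr(q+s-r)})$; because this factor does not depend on $\gbf\Delta$, the resulting $O_p$ bound is automatically uniform over all admissible $\gbf\Delta$.

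For the singular-value sum I would combine two complementary estimates,
\[ \sum_{i=1}^{2r}\sigma_i(\rmbf A)^2\le\min\big\{\,2r\,\|\rmbf A\|_{\mathrm{op}}^2,\ \|\rmbf A\|^2\,\big\}, \]
the first because each of the top $2r$ squared singular values is at most $\|\rmbf A\|_{\mathrm{op}}^2$, the second because the total is the squared Frobenius norm. The Frobenius order is routine: each entry of $\rmbf A$ is the inner product of a column of $\rmbf X\p1$, of squared norm $n(\rmbf\Sigma\p1)_{jj}=O(n)$ since $\rmbf\Sigma\p1$ has bounded diagonal, with a column of $\rmbf E$, so it has mean zero and variance $O(n)$; hence $\mathbb E\|\rmbf A\|^2=O(nsq)$ and Markov's inequality gives $\|\rmbf A\|^2=O_p(nsq)$. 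Granting the operator-norm order $\|\rmbf A\|_{\mathrm{op}}^2=O_p(n(s+q))$ discussed below, an elementary case analysis on whether $r\le\min(s,q)$ (using $r\le\min(p,q)$) shows $\min\{r(s+q),\,sq\}\lesssim r(q+s-r)$, which yields exactly the claimed order and, in particular, produces the sharp $q+s-r$ rather than $q+s$.

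The main obstacle is the operator-norm bound $\|\rmbf A\|_{\mathrm{op}}=O_p(\sqrt{n(s+q)})$, and this is the only genuinely stochastic step. I would prove it by an $\epsilon$-net argument: write $\|\rmbf A\|_{\mathrm{op}}=\sup_{\rmbf u\in S^{s-1},\,\rmbf v\in S^{q-1}}(\rmbf X\p1\rmbf u)^T\rmbf E\rmbf v$, and note that for fixed $\rmbf u,\rmbf v$ the variable $(\rmbf X\p1\rmbf u)^T\rmbf E\rmbf v=\sum_i\rmbf E_i^T(a_i\rmbf v)$, with $a_i=(\rmbf X\p1\rmbf u)_i$, is by Condition \eqref{con:error sub gaussian} and the independence of the rows of $\rmbf E$ sub-Gaussian with variance proxy $C\|\rmbf X\p1\rmbf u\|^2\le Cn\lambda_{\max}(\rmbf\Sigma\p1)=O(n)$. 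Taking $1/4$-nets of the two unit spheres, of cardinalities at most $5^{s}$ and $5^{q}$, a union bound over the $\exp\{O(s+q)\}$ net points combined with the standard comparison between the supremum and its maximum over the net gives $\|\rmbf A\|_{\mathrm{op}}=O_p(\sqrt{n(s+q)})$. The remaining pieces are deterministic matrix inequalities, so once this concentration bound is in hand the lemma follows by assembling the three displays.
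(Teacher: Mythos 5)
Your proof is correct, but it follows a genuinely different route from the paper's. The paper proceeds by empirical-process chaining: it normalizes the image set $\Gamma=\{\rmbf X\p1 \rmbf C\p1/\sqrt{n\lambda_{\max}(\rmbf \Sigma\p1)}\}$, bounds its covering entropy by $r(q+s-r)\log(C/\epsilon)$ via the factorization $\rmbf C\p1=\rmbf D\rmbf A^T$ (using Szarek's bound for the covering number of the set of orthonormal frames), and then applies Dudley's entropy integral to control $\mathbb E\sup_{\gbf\eta\in\Gamma}\langle \gbf\eta,\rmbf E\rangle$. You instead decouple the low-rank direction from the noise deterministically, via von Neumann's trace inequality and Cauchy--Schwarz, reducing everything to the $\gbf\Delta$-free quantity $\bigl(\sum_{i\le 2r}\sigma_i^2(\rmbf X\p1^T\rmbf E)\bigr)^{1/2}$, which you control by $\min\{2r\|\rmbf X\p1^T\rmbf E\|_{\mathrm{op}}^2,\|\rmbf X\p1^T\rmbf E\|^2\}$ with the standard bounds $O_p(n(s+q))$ (one sphere-net concentration argument) and $O_p(nsq)$ (second moments); your case analysis $\min\{r(s+q),\,sq\}\le 2\,r(q+s-r)$ is valid --- for $r\le\min(s,q)$ use $q+s-r\ge(q+s)/2$, and for $s<r\le q$ concavity of $r\mapsto r(q+s-r)$ on $[s,q]$ gives $r(q+s-r)\ge sq$ --- so the sharp $q+s-r$ emerges from the min rather than from a manifold dimension count. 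Each approach buys something: the paper's chaining argument exhibits $r(q+s-r)$ directly as metric entropy of the low-rank class and generalizes to constraint sets without a usable linear-algebraic structure; your argument is more elementary (no Dudley integral, no Grassmannian covering numbers), makes uniformity over the data-dependent difference completely transparent since the random multiplier does not depend on $\gbf\Delta$, and it quietly repairs a constant-level wrinkle in the paper: $\rmbf C\p1^1-\rmbf C\p1^2$ can have rank $2r$, whereas the paper's $\Gamma$ contains only rank-$\le r$ matrices, so its final display strictly requires enlarging $\Gamma$ to rank $\le 2r$ (same order), a point your explicit factor $2r$ handles. Only trivia to fix: a $1/4$-net of the unit sphere in $\mathbb R^d$ has cardinality at most $9^d$ rather than $5^d$, which affects nothing beyond constants.
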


\begin{proof}[Proof of Lemma \ref{lem:oracle estimator}]
	Let $\Gamma=\{\gbf \eta=\rmbf X\p1 \rmbf C\p1 / \sqrt{n \, \lambda_{\max}(\rmbf \Sigma\p1)}: \|\rmbf C\p1 \|\le 1, {\rm rank}( \rmbf C\p1)\le r \}$, where $\lambda_{\max}(\cdot)$ denotes the largest eigenvalue of a symmetric matrix. 
	We first show that the covering entropy $\log N(\epsilon,\Gamma,l_2)\le r(q+s-r)\log(C /\epsilon)$, where $l_2$ denotes the Frobenius norm. In fact, for $\gbf \eta=\rmbf X\p1 \rmbf C\p1 /\sqrt{n \, \lambda_{\max}(\rmbf \Sigma\p1)} \in \Gamma$ with $\|\rmbf C\p1 \|\le 1, {\rm rank}( \rmbf C\p1)\le r$, we can write $\rmbf C\p1:=\rmbf D \rmbf A^T$, $\rmbf D\in \mathbb{R}^{s\times r}$, $\rmbf A\in \mathbb{R}^{q \times r}$, $\|\rmbf D\|\le 1$, $\rmbf A^T \rmbf A=\rmbf I_r$. Let $\mathcal D = \{\rmbf D: \rmbf D \in \mathbb R^{s \times r}, \, \|\rmbf D \| \leq 1\}$ and $\mathcal A = \{ \rmbf A: \rmbf A \in \mathbb R^{q \times r}, \,  \rmbf A^T \rmbf A = \rmbf I_r \}$. In the following, also let $\rmbf D_1$, $\rmbf D_2 \in \mathcal D$ and $\rmbf A_1$, $\rmbf A_2 \in \mathcal A$ respectively.
	The covering number of $\mathcal D$, under the Frobenius norm, is bounded by $(C/\epsilon)^{rs}$. According to Proposition 8 of \cite{szarek82}, the covering number of $\mathcal A$ is bounded by $(C/\epsilon)^{r(q-r)}$ under the distance defined as $d(\rmbf A_1,\rmbf A_2)=\|\rmbf A_1\rmbf A_1^T-\rmbf A_2\rmbf A_2^T\|_{op}$, where $\|.\|_{op}$ denotes the operator norm. To obtain the covering number of $\Gamma$, note that 
	\begin{align*}
		\|\rmbf D_1\rmbf A_1^T-\rmbf D_2\rmbf A_2^T\| & \le  \|\rmbf D_1\rmbf A_1^T-\rmbf D_1\rmbf A_1^T \rmbf A_2\rmbf A_2^T\| + \|\rmbf D_1\rmbf A_1^T \rmbf A_2\rmbf A_2^T-\rmbf D_2\rmbf A_2^T\|\\
		& \le  \|\rmbf D_1 \rmbf A_1^T \| \|\rmbf A_1\rmbf A_1^T- \rmbf A_2 \rmbf A_2^T\|_{op}+\|\rmbf D_1\rmbf A_1^T\rmbf A_2-\rmbf D_2\| \\
		& \le \|\rmbf A_1\rmbf A_1^T- \rmbf A_2 \rmbf A_2^T\|_{op}+\|\rmbf D_1\rmbf A_1^T\rmbf A_2-\rmbf D_2\|,
	\end{align*}
	where the third inequality is because $\|\rmbf D_1 \| \leq 1$ and $\rmbf A_1^T \rmbf A_1 = \rmbf I$.  Since $\rmbf D_1 \rmbf A_1^T \rmbf A_2 \in \mathbb{R}^{s \times r }$ and $\| \rmbf D_1 \rmbf A_1^T \rmbf A_2 \| \leq 1$, we have $\rmbf D_1 \rmbf A_1^T \rmbf A_2 \in \mathcal D$.  Using the covering numbers of $\mathcal D$ and $\mathcal A$, it is shown that $N(\epsilon,\Gamma,l_2)\le (C/\epsilon)^{rs} \cdot (C/\epsilon)^{r(q-r)}$, and thus $\log N(\epsilon,\Gamma,l_2)\le r(q+s-r)\log(C /\epsilon)$.
		
	Furthermore, it follows from the sub-Gaussian error assumption of Condition \eqref{con:error sub gaussian} that
	\begin{equation*}
	\mathbb E\exp( t\langle \rmbf E, \gbf \eta\rangle) \le  \exp(Ct^2\|\gbf \eta\|^2).
	\end{equation*}
	Using Dudley's integral entropy bound \citep[for example, see Theorem 3.1 of][]{koltchinskii11}, we get
	$$\mathbb{E} \sup_{\gbf \eta \in\Gamma}\langle \gbf \eta,\rmbf E \rangle \le C\int_0^2 \sqrt{ r(q+s-r)\log (\frac{C}{\epsilon})}\,d\epsilon \le C \sqrt{ r(q+s-r)}.$$
	The above implies that
	$$\langle \rmbf X\p1 ({\rmbf C}^1\p1-\rmbf C^2\p1)/ \sqrt{n\, \lambda_{\max}(\rmbf \Sigma\p1)}, \rmbf E \rangle \leq \| {\rmbf C^1}\p1-\rmbf C^2\p1 \| \cdot O_p(\sqrt{r(q+s-r)}),$$
	which in turn gives \eqref{eqn:error order} by the assumption on $\rmbf \Sigma\p1$.
\end{proof}

\begin{lemma} \label{lem:error order2}
	Assume Conditions \eqref{con:greatest eigenvalue} -- \eqref{con:error sub gaussian} are satisfied. Then
	\begin{equation} \label{eqn:error order2}
	    |\langle \rmbf E, \rmbf X \rmbf C \rangle| \leq O_p(\sqrt{n q \log(p)}) \sum_{j =1}^p \|\rmbf C_j\|,
	\end{equation}
	for any $p \times q$ random matrix $\rmbf C$, where $\rmbf C_j^T$ is the $j$-th row of $\rmbf C$.
\end{lemma}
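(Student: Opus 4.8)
The plan is to reduce the bilinear quantity to a maximum of sub-Gaussian vector norms and then control that maximum by a union bound. First I would expand the inner product columnwise. Writing $\rmbf X \rmbf C = \sum_{j=1}^p \rmbf X_j \rmbf C_j^T$, where $\rmbf X_j$ denotes the $j$-th column of $\rmbf X$, gives
$$\langle \rmbf E, \rmbf X \rmbf C \rangle = \sum_{j=1}^p \rmbf C_j^T (\rmbf E^T \rmbf X_j).$$
Applying the Cauchy--Schwarz inequality to each summand and pulling out the maximum yields
$$|\langle \rmbf E, \rmbf X \rmbf C \rangle| \le \sum_{j=1}^p \|\rmbf C_j\|\,\|\rmbf E^T \rmbf X_j\| \le \Big(\max_{1\le j\le p}\|\rmbf E^T \rmbf X_j\|\Big)\sum_{j=1}^p \|\rmbf C_j\|.$$
Since this holds deterministically for every realization of $\rmbf C$, the lemma reduces to showing that $\max_{1\le j\le p}\|\rmbf E^T \rmbf X_j\| = O_p(\sqrt{n q\log p})$.

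Next I would establish that each $\rmbf E^T \rmbf X_j = \sum_{i=1}^n X_{ij}\rmbf E_i$ is a sub-Gaussian vector in $\mathbb R^q$. Using the independence of the rows $\rmbf E_i$ together with Condition \eqref{con:error sub gaussian}, for any $\gbf\eta \in \mathbb R^q$,
$$\mathbb E\exp\!\big(t\,(\rmbf E^T\rmbf X_j)^T\gbf\eta\big) = \prod_{i=1}^n \mathbb E\exp\!\big(t X_{ij}\rmbf E_i^T\gbf\eta\big) \le \exp\!\Big(Ct^2\|\gbf\eta\|^2\sum_{i=1}^n X_{ij}^2\Big).$$
Since $\sum_i X_{ij}^2 = \|\rmbf X_j\|^2 = n\,\rmbf\Sigma_{jj}$, and Condition \eqref{con:greatest eigenvalue} forces every diagonal entry $\rmbf\Sigma_{jj}$ to be bounded, the exponent is at most $Cnt^2\|\gbf\eta\|^2$; that is, $\rmbf E^T\rmbf X_j$ is sub-Gaussian with parameter of order $n$, uniformly in $j$.

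Finally I would convert this into a tail bound on $\|\rmbf E^T\rmbf X_j\|$ via a covering of the unit sphere and then take a union bound. Fix a $1/2$-net $\mathcal N$ of $S^{q-1}$ with $|\mathcal N| \le 5^q$, so that $\|\rmbf E^T\rmbf X_j\| \le 2\max_{\gbf\eta\in\mathcal N}(\rmbf E^T\rmbf X_j)^T\gbf\eta$. Each scalar $(\rmbf E^T\rmbf X_j)^T\gbf\eta$ with $\|\gbf\eta\|=1$ is sub-Gaussian with parameter of order $n$, hence $\mathbb P\big((\rmbf E^T\rmbf X_j)^T\gbf\eta > s\big) \le \exp(-s^2/Cn)$. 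Union bounding over $\mathcal N$ and then over $j=1,\dots,p$ gives
$$\mathbb P\Big(\max_{1\le j\le p}\|\rmbf E^T\rmbf X_j\| > 2s\Big) \le p\,5^q\exp(-s^2/Cn),$$
and choosing $s \asymp \sqrt{n(q+\log p)}$ drives the right-hand side to zero. Because $q+\log p \le 2q\log p$ whenever $q,\log p \ge 1$, this gives $\max_j\|\rmbf E^T\rmbf X_j\| = O_p(\sqrt{n(q+\log p)}) = O_p(\sqrt{nq\log p})$, completing the argument.

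The main obstacle is the third step: extracting the correct $\sqrt{q}$ scaling of the Euclidean norm of the $q$-dimensional sub-Gaussian vector, which the net argument handles at the cost of the $5^q$ cardinality that is then absorbed into the $\sqrt{q}$ part of the final rate. An alternative that avoids the net would be to invoke Dudley's entropy bound exactly as in Lemma \ref{lem:oracle estimator}, viewing $\max_j\|\rmbf E^T\rmbf X_j\|$ as the supremum of a sub-Gaussian process indexed by $\{\gbf\eta:\|\gbf\eta\|=1\}\times\{1,\dots,p\}$; either route produces the same rate.
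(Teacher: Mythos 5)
Your proof is correct, and its skeleton coincides with the paper's: the paper's proof also begins with exactly your first reduction, $|\langle \rmbf E, \rmbf X \rmbf C\rangle| = |\langle \rmbf X^T\rmbf E, \rmbf C\rangle| \le \xi \sum_{j=1}^p \|\rmbf C_j\|$ with $\xi = \max_j \|\rmbf X_j^T \rmbf E\|$, and you rightly emphasize the subtlety that this inequality is deterministic, so a single random variable $\xi$, not depending on $\rmbf C$, serves the ``any random matrix $\rmbf C$'' claim. Where you genuinely diverge is in bounding $\xi$: the paper first asserts $\|\rmbf X_j^T\rmbf E\| \le O_p(\sqrt{nq})$ for each $j$ and then cites the maximal inequality for Orlicz norms (Lemma 2.2.2 of van der Vaart and Wellner), which multiplies a uniform per-column $\psi_2$ bound by $\sqrt{\log p}$, giving $\xi \le O_p(\sqrt{nq\log p})$. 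You instead prove from scratch that each $\rmbf X_j^T\rmbf E$ is sub-Gaussian with parameter of order $n$ — correctly using independence of the rows of $\rmbf E$, Condition \eqref{con:error sub gaussian} applied with $tX_{ij}$ in place of $t$, and the fact that Condition \eqref{con:greatest eigenvalue} bounds $\rmbf\Sigma_{jj} \le \lambda_{\max}(\rmbf\Sigma) \le C$ — and then run a $1/2$-net over $S^{q-1}$ with a Chernoff tail bound and a union bound over the $5^q$ net points and the $p$ columns. Your route buys two things: it is self-contained (it fills in precisely the step the paper leaves terse, namely how Conditions \eqref{con:greatest eigenvalue}--\eqref{con:error sub gaussian} yield the uniform sub-Gaussian control that any maximal inequality requires), and it yields the strictly sharper intermediate rate $O_p(\sqrt{n(q+\log p)})$, additive rather than multiplicative in $q$ and $\log p$, which you then deliberately relax to the stated $O_p(\sqrt{nq\log p})$. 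The only caveat is the trivial one you already flag: the relaxation $q + \log p \le 2q\log p$ needs $\log p$ bounded away from $0$ (i.e., $p$ at least $3$, or a constant absorbing the case $p=2$), which is harmless in the paper's asymptotic regime where this lemma is applied with $p \to \infty$ or $p$ replaced by $s$ inside a genuine $O_p$ statement. Both arguments are complete; yours trades a literature citation for an explicit computation and a slightly better constant-level result.
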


\begin{proof}[Proof of Lemma \ref{lem:error order2}]
	To show this result, we first note that
	\begin{equation} \label{eqn:from C to Cj}
	|\langle \rmbf E, \rmbf X \rmbf C \rangle| = |\langle \rmbf X^T \rmbf E , \rmbf C \rangle | \leq \xi \sum\limits_{j=1}^p ||\rmbf C_j||,
	\end{equation}
	where $\xi = \max_{j} \|\rmbf X_j^T \rmbf E \|$. Secondly, Condition \eqref{con:greatest eigenvalue} and \eqref{con:error sub gaussian} imply that 
	\begin{equation} \label{eqn:XENorm}
	\|\rmbf X_j^T \rmbf E  \|  \leq O_p(\sqrt{nq})
	\end{equation}
	 Finally, a direct application of Lemma 2.2.2 of \cite{vaartwellner96} shows that $\xi$ is upper bounded by 
	\begin{equation} \label{eqn:xi sub 2}
		 \xi\leq O_p(\sqrt{nq \log(p)}). 
	\end{equation} 
	Thus, \eqref{eqn:from C to Cj}, \eqref{eqn:XENorm} and \eqref{eqn:xi sub 2} complete the proof.
\end{proof}

\section{Proof of Main Results} \label{sec:proofs} 

\begin{proof}[Proof of Theorem \ref{thm:convergence rate}]
	We organize the proof in the following order: we first show that $\|\rmbf X\widehat{\rmbf C} - \rmbf X \rmbf C^0\| \leq O_p(\sqrt{r(q+s-r)})$ and $\|\widehat{\rmbf C}\p1 - \rmbf C\p1^0 \| \leq O_p(\sqrt{r(q+s-r)/n})$; we then show $\mathbb P(\widehat{\rmbf C}\p2= \rmbf 0) \to 1$ as $n \to \infty$; the third property in the theorem is an immediate result of the above two by noting that $\|\widehat{\rmbf C} - \rmbf C^0\|^2= \|\widehat{\rmbf C}\p1 - \rmbf C\p1^0 \|^2 + \|\widehat{\rmbf C}\p2\|^2$. 
	
	Let $$Q(\rmbf C)= ||\rmbf Y - \rmbf X \rmbf C||^2 +n \sum\limits_{j=1}^p \lambda_j||\rmbf C_j||.$$
	$\widehat{\rmbf C}$ is the solution indicating that
	\begin{equation} \label{eqn:Qc less than Qc0}
       Q(\widehat{\rmbf C}) \leq Q(\rmbf C^0),
	\end{equation}
	where $\rmbf C^0$ is the true parameter matrix with $\rank(\rmbf C^0) \leq r$.
    Denote $\sum_{j =1}^s||\widehat{\rmbf C}_j - \rmbf C^0_j ||=\delta_1$ and $\sum_{j =s+1}^p||\widehat{\rmbf C}_j||=\delta_2$. Note that we have $\rank(\widehat{\rmbf C}\p1) \leq \rank(\widehat{\rmbf C}) \leq r$, and 
	\begin{align} \label{eqn:difference between Q}
	   & \;Q(\widehat{\rmbf C}) - Q(\rmbf C^0) \nonumber \\
	   = & \; ||\rmbf Y - \rmbf X \widehat{\rmbf C}||^2 +n \sum\limits_{j=1}^p \lambda_j||\widehat{\rmbf C}_j|| - ||\rmbf Y - \rmbf X \rmbf C^0||^2
	   - n \sum\limits_{j=1}^p \lambda_j||\rmbf C^0_j||  \nonumber \\
	  = &\; ||\rmbf X(\widehat{\rmbf C}- \rmbf C^0)||^2 - 2 \langle \rmbf{Y-XC^0},\rmbf X(\widehat{\rmbf C} - \rmbf C^0) \rangle + n \sum\limits_{j=1}^{p} \lambda_j \, ||\widehat{\rmbf C}_j|| - n \sum\limits_{j=1}^s \lambda_j ||\rmbf C^0_j|| \nonumber \\ 
	  =& \; ||\rmbf X(\widehat{\rmbf C}- \rmbf C^0)||^2 - 2 \langle \rmbf{E},\rmbf X(\widehat{\rmbf C} - \rmbf C^0) \rangle + n \sum\limits_{j=1}^{p} \lambda_j \, ||\widehat{\rmbf C}_j|| - n \sum\limits_{j=1}^s \lambda_j ||\rmbf C^0_j||.
	\end{align}
	\eqref{eqn:Qc less than Qc0} and \eqref{eqn:difference between Q} imply that 
	\begin{align} \label{eqn:two point inequality}
	   & \; ||\rmbf X(\widehat{\rmbf C}- \rmbf C^0)||^2 \nonumber \\
	   \leq & \;  2 \langle \rmbf{E},\rmbf X(\widehat{\rmbf C} - \rmbf C^0) \rangle + n \sum\limits_{j=1}^s \lambda_j ||\rmbf C^0_j|| - n \sum\limits_{j=1}^{p} \lambda_j \, ||\widehat{\rmbf C}_j||  \nonumber \\
	   \leq & \;  2 \langle \rmbf{E},\rmbf X(\widehat{\rmbf C} - \rmbf C^0) \rangle + n \sum\limits_{j=1}^s \lambda_j ||\widehat{\rmbf C}_j - \rmbf C^0_j|| -  n \sum\limits_{j=s+1}^{p} \lambda_j  ||\widehat{\rmbf C}_j || \nonumber \\
	   \leq & \;   2 \langle \rmbf{E},\rmbf X(\widehat{\rmbf C} - \rmbf C^0) \rangle + n \, \lambda\p1 \delta_1 -  n \, \lambda\p2 \delta_2,
	\end{align}
	where the second line is because of the triangular inequality, and $\lambda\p1$, $\lambda\p2$ are defined as in Condition \eqref{con:variate lasso lambda}.
	
	We now consider two cases: $\delta_2 > 2 \, \delta_1$ and $\delta_2 \leq 2 \, \delta_1$. We will show that $\mathbb P(\delta_2 > 2 \, \delta_1)$ is approaching $0$ and on the event of $\delta_2 \leq 2 \, \delta_1$, the rate of $\|\rmbf X(\widehat{\rmbf C} - \rmbf C^0) \|$ is upper bounded by $O_p(\sqrt{r(q +s -r)})$.
	
	To show $\mathbb P(\delta_2 > 2 \, \delta_1) \to 0$, we first note that
	\newpage
	\begin{align} \label{eqn:case 2 main}
	 2\, |\langle  \rmbf E,\rmbf X(\widehat{\rmbf C} - \rmbf C^0) \rangle | 
	 &=  2\, |\langle  \rmbf E,\rmbf X\p1(\widehat{\rmbf C}\p1 - \rmbf C^0\p1)+ \langle \rmbf E, \rmbf X\p2 \widehat{\rmbf C}\p2 \rangle | \nonumber \\
	 & \leq  2\, |\langle  \rmbf E,\rmbf X\p1(\widehat{\rmbf C}\p1 - \rmbf C^0\p1)\rangle|+2\, | \langle \rmbf E, \rmbf X\p2 \widehat{\rmbf C}\p2 \rangle | \nonumber \\
	&\leq  O_p (\sqrt{n q \log(s)} ) \, \delta_1 + O_p(\sqrt{nq \log(p)}) \, \delta_2,
	\end{align}
	where the first part of the third line is by Lemma \ref{lem:error order2} with the number of predictor variables equal to $s$, and the second part of the third line is a direct application of Lemma \ref{lem:error order2}.
	
	Therefore, given the event of $\delta_2 > 2 \, \delta_1$, \eqref{eqn:two point inequality} and \eqref{eqn:case 2 main} imply a contradiction such that
	\begin{align} \label{eqn:difference of Q1 case 2}
	 0
	& \leq   ||\rmbf X(\widehat{\rmbf C}- \rmbf C^0)||^2 \nonumber \\
	&  \leq   \{ O_p(\sqrt{n q \log(p)}) - n \, \lambda\p2 \} \delta_2  
	+ \{ O_p (\sqrt{n q \log(s)}) + n \,  \lambda\p1  \} \delta_1 \nonumber \\
	& <   \{ O_p(\sqrt{n q  \log(p)})  -2 n \, \lambda\p2
	+ O_p (\sqrt{n q \log(s)}) + n \,  \lambda\p1  \} \delta_1 \nonumber \\
	& <  0
	\end{align}
	with conditional probability approaching $1$, where the third and forth inequalities in \eqref{eqn:difference of Q1 case 2} are because of the second part of Condition \eqref{con:variate lasso lambda}. This contradiction implies that $\mathbb P(\delta_2 < 2 \delta_1)$ is approaching $0$. 
	
	Given the event of $\delta_2 \leq 2 \delta_1$, by Condition \eqref{con:restricted eigenvalue}, we have 
	\begin{equation}  \label{eqn:case 1 using lemma 1}
	||\rmbf X(\widehat{\rmbf C}- \rmbf C^0)||^2 = n\, \mathrm{tr} \{(\widehat{\rmbf C} - \rmbf C^0)^T \rmbf \Sigma (\widehat{\rmbf C} - \rmbf C^0) \} \geq n \, C \|\widehat{\rmbf C}\p1 - \rmbf C^0\p1 \|^2.
	\end{equation}
   It is shown by Lemma \ref{lem:oracle estimator} and Lemma \ref{lem:error order2} that
   \allowdisplaybreaks
	\begin{align} \label{eqn:case 1 main}
	   &\; 2\, |\langle  \rmbf E,\rmbf X(\widehat{\rmbf C} - \rmbf C^0)  \rangle | \nonumber \\
	   = & \;  2\, |\langle  \rmbf E,\rmbf X\p1(\widehat{\rmbf C}\p1 - \rmbf C^0\p1)+ \langle \rmbf E, \rmbf X\p2 \widehat{\rmbf C}\p2 \rangle | \nonumber \\
	   \leq & \;  2\, |\langle  \rmbf E,\rmbf X\p1(\widehat{\rmbf C}\p1 - \rmbf C^0\p1)\rangle|+2\, | \langle \rmbf E, \rmbf X\p2 \widehat{\rmbf C}\p2 \rangle | \nonumber \\
	   \leq & \;  O_p (\sqrt{nr(q+s-r)} ) \|\widehat{\rmbf C}\p1 -\rmbf C^0\p1 \| + O_p(\sqrt{nq \log(p)}) \sum\limits_{j=s+1}^p  ||\widehat{\rmbf C}_j||
	\end{align}
	Therefore, \eqref{eqn:two point inequality}, \eqref{eqn:case 1 using lemma 1}, \eqref{eqn:case 1 main}, and the assumptions of this theorem imply that
	\begin{align} \label{eqn:difference of Q1 case 1}
	   & \; n  \, C \, \|\widehat{\rmbf C}\p1 - \rmbf C^0\p1 \|^2  \nonumber \\
	   \leq & \;   ||\rmbf X(\widehat{\rmbf C}- \rmbf C^0)||^2 \nonumber \\
	   \leq & \;   \{O_p(\sqrt{n q  \log(p)})- n \, \lambda\p2 \} \delta_2 + O_p (\sqrt{nr(q+s-r)} )  \|\widehat{\rmbf C}\p1 - \rmbf C^0\p1 \| + n \,  \lambda\p1 \delta_1 \nonumber \\
	   \leq & \;    \{O_p(\sqrt{n q  \log(p)})- n \, \lambda\p2 \} \delta_2 + \{ O_p (\sqrt{nr(q+s-r)} )  + n\sqrt{s} \,  \lambda\p1  \} \|\widehat{\rmbf C}\p1 - \rmbf C^0\p1 \| \nonumber \\
	  \leq & \;   \{ O_p (\sqrt{nr(q+s-r)} )  + n\sqrt s \,  \lambda\p1  \} \|\widehat{\rmbf C}\p1 - \rmbf C^0\p1 \|, 
	\end{align}
	where the third inequality in \eqref{eqn:difference of Q1 case 1} is because 
	\begin{equation} \label{eqn:CauchyOnDelta1}
	  \delta_1 = \sum_{j =1}^s \|\widehat{\rmbf C}_j - \rmbf C^0_j \| \leq \sqrt{s} \|\widehat{\rmbf C}\p1 - \rmbf C^0\p1 \|
	\end{equation}
	by the Cauchy-Schwarz inequality. Thus it can be shown by the first part of Condition \eqref{con:variate lasso lambda} that 
	\begin{equation} \label{eqn:C1 convergence rate}
	\|\widehat{\rmbf C}\p1 - \rmbf C^0\p1 \| \leq O_p \left(\sqrt{\frac{r(q+s-r)}{n}} \right).
	\end{equation}
	Plugging this result into \eqref{eqn:difference of Q1 case 1} implies that $$||\rmbf X(\widehat{\rmbf C}- \rmbf C^0)||^2 \leq O_p(r(q+s-r)),$$ which is the first property in Theorem \ref{thm:convergence rate}.
	
	To show $\mathbb P(\widehat{\rmbf C}\p2= \rmbf 0) \to 1$, denote $\widetilde{\rmbf C}=(\widehat{\rmbf C}\p1^T, \rmbf 0^T )^T$ as the thresholding estimator of $\widehat{\rmbf C}$. It is obvious that $\rank(\widetilde{\rmbf C}) \leq \rank(\widehat{\rmbf C}) \leq r$, and $||\widetilde{\rmbf C}-\rmbf C^0||  = ||\widehat{\rmbf C}\p1-\rmbf C^0\p1||$. Moreover,
	$$Q(\widehat{\rmbf C})- Q(\widetilde{\rmbf C})= ||\rmbf X(\widehat{\rmbf C}-\widetilde{\rmbf C})||^2 - 2 \langle \rmbf Y - \rmbf X \widetilde{\rmbf C}, \rmbf X (\widehat{\rmbf C}- \widetilde{\rmbf C}) \rangle + n \sum\limits_{j=s+1}^p \lambda_j ||\widehat{\rmbf C}_j||.$$
	We have 
	\begin{align*}
	   | \langle \rmbf Y - \rmbf X \widetilde{\rmbf C}, \rmbf X (\widehat{\rmbf C}- \widetilde{\rmbf C}) \rangle | & =   |\langle \rmbf Y - \rmbf X\p1 \widehat{\rmbf C}\p1, \rmbf X\p2 \widehat{\rmbf C}\p2 \rangle |  \\
	  & =    |\langle \rmbf X\p2^T (\rmbf Y - \rmbf X\p1 \widehat{\rmbf C}\p1 ), \widehat{\rmbf C}\p2 \rangle |  \\
      & \leq    |\langle \rmbf X\p2^T \rmbf E, \widehat{\rmbf C}\p2 \rangle | +  |\langle \rmbf X\p2^T \rmbf X\p1( \widehat{\rmbf C}\p1-\rmbf C\p1^0 ), \widehat{\rmbf C}\p2 \rangle | \\
	  & \leq   \xi\p2 \sum\limits_{j= s+1}^p ||\widehat{\rmbf C}_j|| + \lambda_{\max}(\rmbf X\p2^T \rmbf X\p1) ||\widehat{\rmbf C}\p1 - \rmbf C^0\p1|| \, ||\widehat{\rmbf C}\p2||,
	\end{align*}
	where $\lambda_{\max}(\rmbf X\p2^T \rmbf X\p1)$ denotes the largest singular value of $\rmbf X\p2^T \rmbf X\p1$. Lemma \ref{lem:error order2} shows that $\xi\p2 \leq O_p(\sqrt{n q \log(p)})$. Thus
	\begin{align*}
	  Q(\widehat{\rmbf C})- Q(\widetilde{\rmbf C}) 
	  & \geq \sum\limits_{j= s+1}^p (n \lambda_j - 2 \, \xi\p2)  ||\widehat{\rmbf C}_j|| - 2 \, \lambda_{\max}(\rmbf X\p2^T \rmbf X\p1) ||\widehat{\rmbf C}\p1 - \rmbf C^0\p1|| \, ||\widehat{\rmbf C}\p2|| \\
	  & \geq  (n \lambda\p2 - 2 \, \xi\p2) \sum\limits_{j= s+1}^p ||\widehat{\rmbf C}_j|| - 2 \,  \lambda_{\max}(\rmbf X\p2^T \rmbf X\p1) ||\widehat{\rmbf C}\p1 - \rmbf C^0\p1|| \, ||\widehat{\rmbf C}\p2|| \\
	  & \geq (n \lambda\p2 - 2 \, \xi\p2)  ||\widehat{\rmbf C}\p2|| - 2 \, \lambda_{\max}(\rmbf X\p2^T \rmbf X\p1) ||\widehat{\rmbf C}\p1 - \rmbf C^0\p1|| \, ||\widehat{\rmbf C}\p2|| \\
	  & \geq  ||\widehat{\rmbf C}\p2|| \{ (n \lambda\p2 - 2 \, \xi\p2)  - 2 \, \lambda_{\max}(\rmbf X\p2^T \rmbf X\p1) ||\widehat{\rmbf C}\p1 - \rmbf C^0\p1|| \}, 
	\end{align*}
	where the third inequality is because the second part of Condition \eqref{con:variate lasso lambda} implies that $n \, \lambda\p2 \gg \xi\p2$ and $\sum_{j= s+1}^p ||\widehat{\rmbf C}_j|| \geq ||\widehat{\rmbf C}\p2||$. On the other hand, with the second part of Condition \eqref{con:variate lasso lambda} that $\lambda\p2 \gg \sqrt{r(q+s-r)/n}$, $||\widehat{\rmbf C}\p1 - \rmbf C^0\p1|| \leq O_p(\sqrt{r(q+s-r)/n})$, and $\lambda_{\max}(\rmbf X\p2^T \rmbf X\p1)=O(n)$, we have $n \, \lambda\p2 \gg \lambda_{\max}(\rmbf X\p2^T \rmbf X\p1) ||\widehat{\rmbf C}\p1 - \rmbf C^0\p1||$. Therefore,
	$$
	||\widehat{\rmbf C}\p2||\{ (n \lambda\p2 - 2 \, \xi\p2)  - 2 \, \lambda_{\max}(\rmbf X\p2^T \rmbf X\p1) ||\widehat{\rmbf C}\p1 - \rmbf C^0\p1|| \} > 0
	$$
	with conditional probability approaching $1$ given the event of $||\widehat{\rmbf C}\p2|| > 0$. In other words, on the event of $||\widehat{\rmbf C}\p2|| > 0$, the objective function at $\widetilde{\rmbf C}$ is smaller than that at $\widehat{\rmbf C}$, contradicting with the assumption that $\widehat{\rmbf C}$ is the minimizer of $Q(\rmbf C)$. This completes the second property stated in the theorem. 
\end{proof}

\begin{proof}[Proof of Lemma \ref{lem:group Lasso}]
	The proof of this Lemma is based on a modification of the proof of Theorem \ref{thm:convergence rate}. We similarly define $\delta_1 = \sum_{j =1}^s ||\widehat{\rmbf C}^{\mathrm{Lasso}}_j - \rmbf C^0_j ||$ and $\delta_2 = \sum_{j = s+1}^p ||\widehat{\rmbf C}^{\mathrm{Lasso}}_j||$.  When $\delta_2 > 2 \, \delta_1$, plugging the assumption $\lambda\p1 = \lambda\p2 = \lambda^{\mathrm{Lasso}} \gg \sqrt{q \log(p)/n}$ in \eqref{eqn:difference of Q1 case 2}, the same argument as in the proof of Theorem \ref{thm:convergence rate} shows that $\mathbb P(\delta_2 > 2 \, \delta_1) \to 0$ in the current setting. Given the event of $\delta_2 \leq 2\, \delta_1$, plugging the same assumption in \eqref{eqn:two point inequality}, \eqref{eqn:case 2 main} and \eqref{eqn:case 1 using lemma 1}, it is shown that
	\begin{align} \label{eqn:cor 1 main}
	   n \,  C\, \|\widehat{\rmbf C}^{\mathrm{Lasso}}\p1 - \rmbf C^0\p1 \|^2 
	   &\leq   \{O_p(\sqrt{nq  \log(p)})- n \, \lambda^{\mathrm{Lasso}} \} \delta_2 + \{ O_p (\sqrt{n q \log(s)} )  + n \,  \lambda^{\mathrm{Lasso}}\} \delta_1 \nonumber \\
	   &\leq  n \, \lambda^{\mathrm{Lasso}} \delta_1 \nonumber \\
	   &\leq  O\left(\frac{n^{1-\epsilon}}{s}\right) \delta_1 \nonumber \\
	   &\leq  O\left(\frac{n^{1-\epsilon}}{\sqrt{s}}\right) \|\widehat{\rmbf C}^{\mathrm{Lasso}}\p1 - \rmbf C^0\p1 \| 
	\end{align}
	with conditional probability approaching 1, where the second and third inequalities in \eqref{eqn:cor 1 main} is because of Condition \eqref{con:lasso lambda} on $\lambda^{\mathrm{Lasso}}$ and the forth inequality is a direct application of \eqref{eqn:CauchyOnDelta1}. Thus, $\|\widehat{\rmbf C}^{\mathrm{Lasso}}\p1 - \rmbf C\p1^0 \| \leq O_p(n^{-\epsilon}/\sqrt{s})$. Moreover, on the even of $\delta_2 \leq 2\,\delta_1$, $\|\widehat{\rmbf C}^{\mathrm{Lasso}}\p2\| \leq \delta_2 \leq 2 \, \delta_1 \leq 2 \sqrt{s} \, \|\widehat{\rmbf C}^{\mathrm{Lasso}}\p1 - \rmbf C\p1^0 \| =O_p(n^{-\epsilon})$ by \eqref{eqn:CauchyOnDelta1}. The combination of these two results implies that
	$$
	\|\widehat{\rmbf C}^{\mathrm{Lasso}} - \rmbf C^0 \| =  \|\widehat{\rmbf C}^{\mathrm{Lasso}}\p1 - \rmbf C\p1^0 \| + \|\widehat{\rmbf C}^{\mathrm{Lasso}}\p2\| \leq O_p(n^{-\epsilon}).
	$$ Finally, with $\epsilon >0$, Condition \eqref{con:uniform lower bound for all C} and the upper bound of the convergence rate of $\widehat{\rmbf C}^{\mathrm{Lasso}}$ imply that $$\|\widehat{\rmbf C}^{\mathrm{Lasso}}_j \| \geq \|\rmbf C^0_j \| - \|\widehat{\rmbf C}^{\mathrm{Lasso}}_j - \rmbf C^0_j \| > \|\rmbf C^0_j \|/2 \geq C/2 > 0 , \quad \forall \, j= 1, \dots, s$$
	for some constant $C$, with probability approaching $1$.
\end{proof}

\begin{proof}[Proof of Theorem \ref{thm:adaptive group Lasso}]
	We need to check that Condition \eqref{con:variate lasso lambda} used in Theorem \ref{thm:convergence rate} is satisfied. According to Lemma \ref{lem:group Lasso}, $\|\widehat{\rmbf C}^{\mathrm{Lasso}}_j \| > C/2$ with some constant $ C>0$, $j =1, \dots, s$. This result, together with the upper bound of $\lambda^{\mathrm{Adap}}$ stated in Condition \eqref{con:condition on beta}, imply that $\lambda^{\mathrm{Adap}}\, ||\widehat{\rmbf C}^{\mathrm{Lasso}}_j||^{-\beta} \leq O_p(\sqrt{r(q+s-r)/ns})$, $j =1, \dots, s$, thus the first part of Condition \eqref{con:variate lasso lambda} holds. It also follows from Lemma \ref{lem:group Lasso} that $||\widehat{\rmbf C}^{\mathrm{Lasso}}_j|| \leq O_p(n^{-\epsilon})$, $j = s+1, \dots, p$. This together with Condition \eqref{con:condition on beta} imply the second part of Condition \eqref{con:variate lasso lambda}. Finally, Condition \eqref{con:uniform lower bound for all C} and the upper bound of the convergence rate of $\widehat{\rmbf C}^{\mathrm{Adap}}$ stated in Theorem \ref{thm:convergence rate} imply that
	$$\|\widehat{\rmbf C}^{\mathrm{Adap}}_j \| \geq \|\rmbf C^0_j \| - \|\widehat{\rmbf C}^{\mathrm{Adap}}_j - \rmbf C^0_j \| > \|\rmbf C^0_j \|/2 \geq C/2 > 0 , \quad \forall \, j= 1, \dots, s$$
	for some constant $C$, with probability approaching $1$.
\end{proof}

\section*{Acknowledgement}
Huang's work was partially supported by NSF grant DMS-1208952.

\spacingset{1.00} 
\bibliographystyle{jasa}
\bibliography{reference}
\end{document}